 \newtheorem{thm}{Theorem}[section]
 \newtheorem{cor}[thm]{Corollary}
 \newtheorem{prop}[thm]{Proposition}
 \theoremstyle{definition}
 \newtheorem{defn}[thm]{Definition}
 \theoremstyle{remark}
 \newtheorem{rem}[thm]{Remark}
 \newtheorem*{ex}{Example}
 \numberwithin{equation}{section}
\newcommand{\Cinf}{\ensuremath{\mathcal{C}^\infty}}
\newcommand{\cinfty}{\ensuremath{\mathcal{C}^\infty}}
\newcommand{\D}{\ensuremath{{\mathcal D}}}
\newcommand{\E}{\ensuremath{{\mathcal E}}}
\newcommand{\mb}[1]{\ensuremath{\mathbb{#1}}}
\newcommand{\R}{\mb{R}}
\newcommand{\Z}{\mb{Z}}
\newcommand{\G}{\ensuremath{{\mathcal G}}}
\newcommand{\WF}{\mathrm{WF}}
\newcommand{\singsupp}{\mathrm{sing supp}}
\newcommand{\Char}{\ensuremath{\text{Char}}}
\renewcommand{\d}{\ensuremath{\partial}}
\newfont{\bl}{msbm10 scaled \magstep2}
\newcommand{\beq}{\begin{equation}}
\newcommand{\eeq}{\end{equation}}
\newcommand{\col}{\colon}
\newcommand{\FT}[1]{\widehat{#1}}
\newcommand{\F}{\ensuremath{{\mathcal F}}}
\newcommand{\notmid}{\mid\kern-0.5em\not\kern0.5em}
\newcommand{\norm}[2]{{\| #1 \|}_{#2}}
\newcommand{\al}{\alpha}
\newcommand{\be}{\beta}
\newcommand{\eps}{\varepsilon}
\newcommand{\vphi}{\varphi}
\newcommand{\la}{\lambda}
\newcommand{\sig}{\sigma}
\newcommand{\supp}{\mathop{\mathrm{supp}}}
\newcommand{\sinc}{\ensuremath{\text{ sinc}}}
\newcommand{\smoothing}{\Psi^{-\!\infty}}
\newcommand{\maG}{\mathcal{G}}
\newcommand{\ep}{\varepsilon}
\newcommand{\maD}{\mathcal{D}}
\newcommand{\Sch}{\mathscr{S}}
\newcommand{\CC}{\mathbb C}
\newcommand{\RR}{\mathbb R}
\newcommand{\gs}{{\mathcal G}}
\newcommand{\WFg}{\mathrm{WF}_g}
\newcommand{\maE}{\mathscr{E}}
\begin{document}
%-------------------------------------------------------------------------
% editorial commands: to be inserted by the editorial office
%
%\firstpage{1}
%\volume{228}
%\Copyrightyear{2004}
%\DOI{003-0001}
%
%
%\seriesextra{Just an add-on}
%\seriesextraline{This is the Concrete Title of this Book\br H.E. R and S.T.C. W, Eds.}
%
% for journals:
%
%\firstpage{1}
%\issuenumber{1}
%\Volumeandyear{1 (2004)}
%\Copyrightyear{2004}
%\DOI{003-xxxx-y}
%\Signet
%\commby{inhouse}
%\submitted{March 14, 2003}
%\received{March 16, 2000}
%\revised{June 1, 2000}
%\accepted{July 22, 2000}
%
%
%
%---------------------------------------------------------------------------
%Insert here the title, affiliations and abstract:
%
\title[]
 {Geometric regularization on Riemannian and \\ Lorentzian manifolds}
%----------Author 1
\author{Shantanu Dave}

\address{%
Faculty of Mathematics, University of Vienna\\
Nordbergstra{\ss}e 15,
A 1090 Wien\\
Austria}

\email{shantanu.dave@univie.ac.at}

%----------Author 2
\author{G\"unther H\"ormann}
\address{Faculty of Mathematics, University of Vienna\\
Nordbergstra{\ss}e 15,
A 1090 Wien\\
Austria}
\email{guenther.hoermann@univie.ac.at}

%\thanks{}

%----------Author 2
\author{Michael Kunzinger}
\address{Faculty of Mathematics, University of Vienna\\
Nordbergstra{\ss}e 15,
A 1090 Wien\\
Austria}
\email{michael.kunzinger@univie.ac.at}

%----------classification, keywords, Date
\subjclass{ \ \\Primary 58J37; Secondary 46F30,46T30,35A27,53C50}

\keywords{Regularization of distributions, complete Riemannian manifolds, 
globally hyperbolic Lorentz manifolds, algebras of generalized functions}

%\date{\today}
%----------additions
%\dedicatory{}
%%% ----------------------------------------------------------------------

\begin{abstract}

We investigate regularizations of distributional sections of vector bundles
by means of nets of smooth sections that preserve the main regularity properties
of the original distributions (singular support, wavefront set, Sobolev regularity).
The underlying regularization mechanism is based on functional calculus of elliptic operators with 
finite speed of propagation with respect to a complete Riemannian metric. 
As an application we consider the interplay between the wave equation on a Lorentzian manifold and 
corresponding Riemannian regularizations, and under additional regularity
assumptions we derive bounds on the rate of convergence of their commutator. We  also show that
the restriction to underlying  space-like foliations behaves well with respect to
these regularizations.

\end{abstract}

%%% ----------------------------------------------------------------------
\maketitle
%%% ----------------------------------------------------------------------
%\tableofcontents

\section{Introduction}\label{intro}
We consider regularization processes to smooth out distributions on Riemannian and globally hyperbolic Lorentzian manifolds and investigate their compatibility with the wave-equation. 
The Riemannian setting has been addressed in \cite{DHK:10} and we include an introduction
to this approach. For a globally hyperbolic manifold we pick a splitting of the metric as obtained by \cite{BS:05} which provides us with a globally defined time function and a foliation by space-like hypersurfaces.  The associated  Riemannian metric  naturally allows us to construct regularization  processes on distributional sections of tensor-bundles and differential forms. We show that these regularizations interact nicely with the wave-equation on the Lorentzian manifold and with the foliation provided by the metric splitting.

By a regularization process we mean a net of smoothing operators that assigns a net of 
approximating smooth functions (or sections in a vector bundle) to any given distribution
(or distributional section). We are interested in preserving a maximal set of regularity
properties of the distribution in this process (support, singular support, wavefront set,
Sobolev regularity). To assign such properties to approximating nets of smooth objects
we employ the language of algebras of generalized functions in the sense of Colombeau
(\cite{C1,C2,O,GKOS}). In this approach any such regularization process provides an
embedding of distributions into a space $\G(M)$ of nonlinear generalized functions (given
by a quotient construction on spaces of approximating nets). This process
preserves the regularity and singularity structure of the distributions as described in Section \ref{CR} below.   The main interest in employing these regularizations is in studying non-smooth  curved space-times.

%On the one hand, geometric embeddings or regularizations are an important tool for the geometric theory of generalized functions in their own right. On the other hand, there are situations where such  regularization techniques are desirable even in applications to non-smooth curved space-times. To give a rough sketch of a model situation consider 

Our approach is motivated by work on wave equations on non-smooth curved space-times.  C.J.S.\ Clarke (\cite{Cl1,Cl2,Cl3}) suggested to study physical fields for understanding the singularity
structure of the space-time itself, i.e., to consider wave equations in low regularity. 
His work gave rise to a number of further studies. In particular, in the framework of generalized
functions this line of research was pursued in \cite{VW,GMS:09,SV}. For a more detailed 
introduction we refer to \cite{HKS}.

Thus let us consider a model  situation and  analyze the  generalized solutions to wave equations corresponding to a generalized Lorentz metric $\tilde{g}$ on the smooth manifold $M$ (cf.\ \cite{GKOS,KS:02,GMS:09}), i.e., a global Cauchy problem with initial data $a,b \in \D'(S)$ on a suitable initial value surface $S$ in the form
$$
  \Box_{\tilde{g}}\, u = 0, \quad u\! \mid_S\, = \iota(a),\; 
\nabla_{\text{n}} u\! \mid_S\, = \iota(b),
$$
where $\iota$ is assumed to be an embedding of distributions into the algebra of generalized functions. %(on $S$ as well as on $M$ by abusing notation).

Suppose that the generalized Lorentzian structure induced by $\tilde{g}$ allows a splitting of the wave operator in the form  
$\Box_{\tilde{g}} = \gamma \cdot (\Box_g + Q)$, where $g$ is a smooth ``background'' metric, $\gamma$ is a positive measure or a strictly positive generalized function, and $Q$ is a partial differential operator with distributions or generalized functions as coefficients, but whose coefficient singularities are concentrated in certain space-time regions. A simple example is a generalized Robertson-Walker space-time with $\tilde{g} = - dt^2 + (1 + \mu(t)) h$ on $M = \R \times S$, where $h$ is a smooth Riemannian metric on the smooth manifold $S$ and $\mu$ is a nonnegative generalized function. In this case $g = -dt^2 + h$ can serve as a smooth background metric and the above splitting would involve $\gamma = 1/(1 + \mu)$ and $Q = Q(t,\d_t)$, which acts only on the one-dimensional factor.

If we impose the splitting assumption as above, then we have for any generalized function $u \in \G(M)$ that the equation $\Box_{\tilde{g}}\, u = 0$ is equivalent to $\Box_g\, u + Q u = 0$. Let $u \in \G(M)$ be a solution to the Cauchy problem stated above. In attempting to extract its distributional aspects or assess its singularity structure we could adopt the following strategy. Let $v$ be the distributional solution (assuming that it exists and is unique) to
$$
  \Box_g\, v = 0, \qquad  v\! \mid_S\, = a, \quad
\nabla_{\text{n}} v\! \mid_S\, = b,
$$
and put $w := \iota(v) - u \in \G(M)$ as a comparison of the generalized function $u$ with its ``background distributional aspect''. Now, \emph{if the embedding $\iota$ commutes with restriction to $S$ and with $\Box_g$}, then simple manipulations allow us to draw the conclusion that $w$ satisfies the following Cauchy problem:
$$
   \Box_g w = Q u, \qquad  w\! \mid_S\, = 0,\quad 
  \nabla_{\text{n}} w\! \mid_S\, = 0. 
$$
Thus, $w$ satisfies an inhomogeneous Cauchy problem corresponding to the smooth background metric $g$ and with generalized functions occurring only on the right-hand side of the equation. In particular, $w$ will be smooth in regions where null geodesics corresponding to $g$ emanating from $S$ do not intersect the $\Cinf$-singular support of $Q u$.

%%%%%%%%%%%%%%%%%%%%%%%%%%%%%%%%%%%%%%%%%%%%%%%%%
In the more general situation of a globally hyperbolic Lorentz manifold  provided with a foliation
into spacelike hypersurfaces via a splitting of the metric we shall construct  an embedding $\iota $  and show that the embedding almost commutes with $\Box_g$ and with restriction to space-like slices of the foliation.  The extent of failure to commute is roughly  measured by the divergence of the volume element on the slices along the time  direction.  Although we state our results in the scalar case, the extension to the case of  wave-equations on differential forms is obvious.

\section{Regularization of distributions on complete Riemannian manifolds}\label{CR}

In this section we describe a geometric regularization procedure for distributions  (or distributional sections of vector bundles) on complete Riemannian manifolds. This procedure encodes regularity and singularity features in terms of asymptotic behavior. Our approach is based on  \cite{DHK:10}, to which we refer for further details.

We first fix some notations from the theory of distributions.
Let $M$ be an orientable complete Riemannian manifold of dimension
$n$ with Riemannian metric $g$. The space $\D'(M)$ of Schwartz distributions on $M$
is the dual of the space $\Omega_c^n(M)$ of compactly supported $n$-forms
on $M$. Further, $\D(M)$ is the space of smooth compactly supported functions
on $M$. We identify $\D(M)$ with
$\Omega_c^n(M)$ via $f\mapsto f\cdot dg$, with $dg$ 
the Riemannian volume form induced by $g$. Thus $\D'(M)$ can be viewed as
the dual space of $\D(M)$. We consider $L^1_{\mathrm{loc}}(M)$ (hence in particular 
$\cinfty(M)$) a subspace of $M$ via $f\mapsto [\vphi \mapsto \int_M f\vphi dg]$.
If $E$ is a vector bundle over $M$ then $\D'(M:E)$, the space
of $E$-valued distributions on $M$ is defined by $\D'(M:E) = \D'(M)\otimes_{\cinfty(M)} 
\Gamma^\infty(M:E)$, with $\Gamma^\infty(M:E)$ the space of smooth sections
of $E$. By ${\mathcal E}'(M:E)$ we denote the space of compactly supported distributional
sections of $E$.
We shall assume that $E$ is endowed  with a Hermitian inner product so that the distributional sections $\maD'(M:E)$ of $E$ can be identified with the dual $\Gamma_c^{\infty}(M:E)'$ of the space of compactly supported smooth sections. 
Regularity properties of distributions we are interested in are, in particular,
encoded in the singular support $\singsupp(w)$, the wavefront set $\WF(w)$, and
the Sobolev regularity of any given $w\in \D'(M)$.

%There are two important features of a distribution $u$:
%\begin{enumerate}
% \item Regularity of $u$  say in sense of Sobolev spaces.
% \item Singularities of $u$  given by its  support, singular-support and wave-front set.
%\end{enumerate}

In order to be able to track these regularity properties in terms of regularizations of a
given distribution we need a conceptual framework that allows to assign geometrical and
analytical properties to regularizations, i.e., to nets of smooth functions. Algebras of generalized
functions in the sense of J.\ F.\ Colombeau (\cite{C1,C2,O}) 
indeed provide a well-developed theory of this
kind and below we shall briefly review some basic definitions, based mainly on \cite{GKOS,Gtop}.

The basic idea of Colombeau's approach is to assign to any given locally convex space $X$
a space $\G_X$ of generalized functions as follows. We define the space ${\mathcal M}_X$
of moderate nets to consist of those maps $\eps\mapsto x_\eps$ ($\eps \in I:=(0,1]$) that are smooth
and for any seminorm $\rho$ of $X$ satisfy $\rho(X_{\varepsilon}) = O(\varepsilon^N)$ 
for some integer $N$ as
$\eps\to 0$. Similarly we call a net  $x_{\varepsilon}$ negligible if for all seminorms $\rho$ and all integers $N$ the asymptotic relation $\rho(X_{\varepsilon})\sim O(\varepsilon^N)$ holds. The space $\maG_X$ is defined as the quotient of the space of all moderate nets by the space 
${\mathcal N}_X$ of all negligible nets. The class represented by a net $x_{\ep}$ shall be denoted by $[x_{\ep}]$.  In case $M$ is a smooth manifold we call $\maG(M):=\maG_{\mathcal{C}^{\infty}(M)}$
the standard (special) Colombeau algebra
of generalized functions on $M$ (\cite{C1,DD,GKOS}).
If $E\rightarrow M$ is a vector bundle then we set $\maG(M:E):=\maG_{\Gamma^{\infty}(M:E)}$. 
For $E=\CC$ the space $\maG_{\CC}$ inherits a  ring structure from $\CC$. It is therefore
called the space of generalized numbers, denoted by $\tilde{\CC}$. 
Every space $\maG_E$ is naturally a $\tilde{\CC}$-module, and hence  is 
called the $\tilde{\CC}$-module associated with $E$ (\cite{Gtop}).

Similarly we shall also consider the subspace $\maG^{\infty}_X$ of regular elements of $\G_X$, defined as the space of all those elements of $\maG_X$ that can be represented by a net $x_{\varepsilon}$ for which one can find an integer $N$ in the above  relations independent of any seminorm $\rho$  on $X$. Thus $\maG^{\infty}_X$  is the space of uniformly controlled asymptotics.  Again we set $\maG^{\infty}(M):=\maG^{\infty}_{\mathcal{C}^{\infty}(M)}$. In regularity theory,
$\G^\infty(M)$ is the analogue of $\cinfty(M)$ in the theory of distributions (\cite{O,H,DPS,GH}).
This is based on the fundamental result (\cite{O}) that for open subsets $\Omega$ of $\R^n$,
$\iota(\D'(\Omega))\cap \G^\infty(\Omega) = \cinfty(\Omega)$, where $\iota$ is the standard
embedding of $\D'$ into $\G$ via convolution with a standard mollifier. We will introduce
further notions of regularity theory based on $\G^\infty$ below. 

%Thus for $u\in\maG(M)$ the {\em singular support} of $u$ is defined as the  complement of the largest open set $U$ on which  the restriction  $u|_U$ is in $\maG^{\infty}(U)$. The {\em wavefront set}
%of $u$ can be described as follows (\cite{GH}): Let $P$ be an order  $0$ classical pseudodifferential operator  
%and let $\operatorname{char}(P)\subseteq T^*M$ be the characteristic set of $P$ (the $0$-set in $T^*M\setminus 0$ 
%of its principal symbol). Then for $u\in \gs(M)$,
%\[\WFg(u)=\bigcap_{Pu\in\maG^{\infty}(M)}  \operatorname{char}(P) \qquad (P\in\Psi^0_{cl}(M)).\]

The assignment $X\mapsto  \maG_X$ is obviously functorial, so that  any continuous linear map $\phi:X\rightarrow Y$  naturally induces a map $\phi_*:\maG_X\rightarrow \maG_Y$. Thus a smooth map $f:M\rightarrow N$  gives rise to a pull-back $f^*:\maG(M)\rightarrow \maG(M)$. In particular $\maG(M)$ defines a fine sheaf of algebras and similarly $\maG(M:E)$ is a fine sheaf of $\maG(M)$-modules.

We finally introduce the notion of association, which provides a concept of weak equality
between elements of  Colombeau spaces as well as a standardized way of assigning distributional
limits to certain Colombeau generalized functions. For $u$, $v\in \G(M:E)$ we say that
$u$ is associated (or weakly equal) to $v$, $u\approx v$ if $u_\eps - v_\eps\to 0$ in $\D'(M:E)$ for (some, hence any) 
representatives of $u$, $v$. We say that $u$ possesses $w\in \D'(M:E)$ as a distributional
shadow if $u_\eps \to w$ in $\D'(M:E)$.

To construct the desired regularization process on $\maD'(M:E)$ we need two pieces of data:
\begin{enumerate}
 \item A Schwartz function $F\in \Sch(\R)$ such that $F\equiv 1$ near the origin.
 \item An elliptic symmetric  differential operator $D$ on $E$ such that the speed of propagation  $C_D$, defined as
\begin{equation}\label{cd}
C_D=\operatorname{sup} \{\|\sigma_D(x,\xi)\|~|~ x\in M,\ \|\xi\|=1\}
\end{equation}
is finite.
Here $\sigma_D$ denotes the principal symbol of $D$.
Such an operator $D$ will be referred to as {\em admissible}.
\end{enumerate}
 As a consequence of the finite speed of propagation, the symmetric operator $D$ is essentially self-adjoint (\cite[10.2.11]{HR}).  Therefore the equation
 \begin{eqnarray}\label{transport_eqn}
 \frac{\partial}{\partial t}u=iDu\qquad u(\,.\,,0)=u_0,
 \end{eqnarray}
 has a unique solution for all times $t$ for any initial datum $u_0\in\Gamma_c^{\infty}(M:E)$.
 Indeed it follows from functional calculus that $e^{itD}u_0$ is a solution, and uniqueness
 can be established using energy estimates.
 
We note that functional calculus defines a map 
$$
\Sch(\RR)\ni f\mapsto f(D) \in \mathcal{B}(L^2(M:E))
$$ 
where the operator $f(D)$ can also be expressed by the Fourier inversion formula:
\begin{eqnarray}\label{FIF}
f(D):=\frac{1}{2\pi}\int_{\RR}\hat{f}(s)e^{isD}ds
\end{eqnarray}
(in the sense of strong operator convergence).
 Since the operator $D$ is elliptic, it  follows that $f(D)$  has a smooth kernel by elliptic regularity.

 Let $F\in\Sch(\RR)$   with $F\equiv 1$ near the origin and set $F_{\varepsilon}(x):=F(\varepsilon x)$. Then we shall obtain the desired regularizing procedure  from $F_{\varepsilon}(D)$. To do this we first take a closer look at  singularity properties available in generalized functions.
 
As has been mentioned above the space $\maG(M:E)$ is a fine sheaf over $M$, hence provides a notion of support. Furthermore,
the notion of wave-front set can be defined for generalized functions both locally and in invariant global terms analogous to  distributions.
For $\Omega\subseteq \R^n$ open we call $u\in \gs(\Omega)$
$\gs^\infty$-microlocally regular at $(x_0,\xi_0)\in T^*\Omega\setminus 0$ if there exists some
test function $\varphi
\in \D(\Omega)$ with $\varphi(x_0)=1$ and a conic neighborhood $\Gamma\subseteq \R^n\setminus 0$ of $\xi_0$ such that
the Fourier transform ${\mathcal F}(\varphi u)$ is rapidly decreasing in $\Gamma$, i.e., there exists $N$ such that for all $l$,
%\begin{equation}\label{wfest}
$$
 \sup_{\xi\in \Gamma} (1+|\xi|)^l|(\varphi u_\eps)^\wedge(\xi)| = O(\eps^{-N}) \qquad (\eps\to 0).
$$
%\end{equation}
The generalized wave front set of $u$, $\WFg(u)$, is the complement of the set of points $(x_0,\xi_0)$
where $u$ is $\gs^\infty$-microlocally regular. By \cite{Ha}, for any $u\in \maG(M)$, $\WFg(u)$ can naturally be viewed as a subset of $T^*M\setminus0$ . It is  equivalently defined as  (cf.\cite{GH}),
\[
\WFg(u)=\bigcap_{Pu\in\maG^{\infty}(M)} \operatorname{char}(P)\qquad (P\in\Psi^0_{cl}(M)).
\]
Here $\operatorname{char}(P)\subseteq T^*M$ is the characteristic set of the
order  $0$ classical pseudodifferential operator $P$. The singular support of $u$, $\singsupp_g(u)$, is the complement of the maximal open set on which $u\in \G^\infty$. It then follows that $\singsupp_g(u) = \mathrm{pr}_1(\WFg)$.

% can be defined in an analogous way by using multiplication $M_f$  by a smooth function $f$ instead, that is:
% \[
% \textrm{singsupp}(u):=\bigcap_{M_fu\in\maG^{\infty}(M)} \operatorname{char}(M_f)\qquad (f\in\smooth(M)).
% \]

We are now ready to define  a class of regularization procedures 
for distributional sections of a vector bundle.
\subsection{Regularizations}
\begin{defn}\label{bundle_embedding}
   A parametrized family $(T_{\varepsilon})_{\eps\in I}$ of properly supported smoothing operators (in the sense
   of \cite[ch.\ 1.4]{CP}) is
 called an optimal regularization process if
 \begin{enumerate} %[(A)]
 \item \label{moderatevb}
 The regularization of any compactly supported distributional section $s\in\mathcal{E}'(M:E)$ is of moderate growth:
For any continuous seminorm $\rho$ on $\Gamma^{\infty}(M:E)$, there exists some integer $N$ such that
   \[
\rho(T_{\varepsilon}s) = O(\varepsilon^{N}) \qquad (\eps\to 0).
\]
\item\label{Identityvb} The net $(T_{\varepsilon})$ is an approximate identity: for each
$s\in \maE'(M:E)$, 

\[\underset{\varepsilon \rightarrow 0}{\operatorname{lim}}T_{\varepsilon}s=s\quad \textrm{in}\,\,\maD'(M:E).\]

\item\label{negligiblevb} If $u\in\Gamma_c^{\infty}(M:E)$ is a smooth compactly supported section of $E$ then
for all continuous seminorms $\rho$ and given any integer $m$,
\[
\rho(T_{\varepsilon}u-u) = O(\varepsilon^m).
\]

\item\label{singularityvb} 
The  induced map $\iota_T: {\mathcal E}'(M:E)\rightarrow  \maG(M:E)$  preserves  support, and extends to a sheaf map $\D'(M:E)\rightarrow \maG(M:E)$ that satisfies, 
\begin{eqnarray}\label{MO}
\iota_T(\D'(M:E))\cap \maG^{\infty}(M:E)= \Gamma^{\infty}(M:E). 
\end{eqnarray}
This implies in particular that $\iota_T$ preserves singular support.
\item The map $\iota_T$ preserves wave-front sets in the sense that for any distribution $s\in\D'(M:E)$
\[\WF(u)=\WF_g(\iota_T(u)).\]
\end{enumerate}
\end{defn}
As mentioned already we shall use spectral properties of the  elliptic differential operators  
to obtain  regularizing processes satisfying the  above conditions.

Our main result in this section is the following.

\begin{thm}\label{operator_embedding}
Let  $F\in \Sch(\RR)$ be a  Schwartz function such that $F\equiv 1$ near the origin. Let $F_{\varepsilon}(x):=F(\varepsilon x)$. 
Given an admissible differential operator $D$, %and a Schwartz function $F$ as above 
the family of operators $(F_{\varepsilon}(D))_{\eps\in I}$ provides an optimal regularization process in sense of Def.\ \ref{bundle_embedding}.
\end{thm}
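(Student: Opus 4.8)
The plan is to verify the five defining properties of Definition \ref{bundle_embedding} one at a time, using the spectral (functional) calculus of $D$ together with the finite speed of propagation $C_D$ as the two recurring tools. Two preliminary observations come first. Since $F_\eps\in\Sch(\R)$ and $D$ is elliptic, each $F_\eps(D)$ has a smooth Schwartz kernel by elliptic regularity, so the $F_\eps(D)$ are genuinely smoothing. They are not literally properly supported, however, because $\FT{F}$ is only Schwartz, not compactly supported; I would repair this by splitting $\FT{F_\eps}=\chi\FT{F_\eps}+(1-\chi)\FT{F_\eps}$ with $\chi$ a cut-off equal to $1$ near the origin and supported in $[-R,R]$. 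Inserting this into the Fourier inversion formula \eqref{FIF} and using finite propagation speed, the first term yields a properly supported operator $P_\eps$ (its kernel lives within distance $C_D R$ of the diagonal), while the tail has operator norm bounded by $\tfrac{1}{2\pi}\int_{|t|>R/\eps}|\FT{F}(t)|\,dt=O(\eps^{m})$ for every $m$ by the Schwartz decay of $\FT{F}$. Thus $F_\eps(D)-P_\eps$ is negligible and the two nets induce the same map $\iota_T$; one may take $T_\eps:=P_\eps$ and then work with $F_\eps(D)$ throughout.

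For properties (\ref{moderatevb})--(\ref{negligiblevb}) the basic mechanism is to trade regularity against powers of $\eps$ through the spectral calculus. I would begin with (\ref{negligiblevb}): for $u\in\Gamma_c^\infty(M:E)$ write $F_\eps(D)u-u=G_\eps(D)u$ with $G_\eps:=F_\eps-1$. Because $F\equiv1$ on $[-c,c]$, the function $G_\eps$ vanishes on $[-c/\eps,c/\eps]$, so $\Psi_{\eps,k}:=(1+\lambda^2)^{-k}G_\eps$ satisfies $\|\Psi_{\eps,k}\|_\infty=O(\eps^{2k})$ (on the support $|\lambda|>c/\eps$ the factor $(1+\lambda^2)^{-k}$ is $O(\eps^{2k})$). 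Since $G_\eps(D)u=\Psi_{\eps,k}(D)(1+D^2)^k u$, the spectral theorem gives $\|G_\eps(D)u\|_{L^2}\le\|\Psi_{\eps,k}\|_\infty\,\|(1+D^2)^k u\|_{L^2}=O(\eps^{2k})$. As $D$ is elliptic the graph norm of $(1+D^2)^k$ controls high local Sobolev norms, and Sobolev embedding upgrades this to decay of every $\Cinf$-seminorm; since $k$ is arbitrary this is (\ref{negligiblevb}). Property (\ref{moderatevb}) is the same computation run in reverse: a compactly supported $s$ lies in some $H^{-m}$, and the Schwartz bound $|F(\eps\lambda)|\le C_p(1+\eps|\lambda|)^{-p}$ shows that $(1+D^2)^a F_\eps(D)(1+D^2)^b$ has $L^2$-operator norm $O(\eps^{-2(a+b)})$, i.e.\ $F_\eps(D)$ maps any Sobolev space into any other with norm polynomial in $1/\eps$; elliptic regularity again converts this into moderate bounds for all $\Cinf$-seminorms. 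For (\ref{Identityvb}), $F_\eps(\lambda)\to F(0)=1$ pointwise with $\|F_\eps\|_\infty$ bounded, so $F_\eps(D)\to\mathrm{Id}$ strongly on $L^2$; testing $F_\eps(D)s$ against $\vphi\in\Gamma_c^\infty$ and moving the operator onto $\vphi$ by self-adjointness of $D$ (essential self-adjointness was recorded above), property (\ref{negligiblevb}) applied to $\vphi$ yields $\langle F_\eps(D)s,\vphi\rangle\to\langle s,\vphi\rangle$.

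For property (\ref{singularityvb}) the support statement is exactly where finite speed of propagation enters: if a point $x$ lies at distance $>\delta$ from $\supp(s)$, then the kernel of $e^{isD}$ connecting them vanishes for $|s|<\delta/C_D$, so the only contribution to $F_\eps(D)s$ near $x$ comes from $|s|\ge\delta/C_D$ in \eqref{FIF}, which is $O(\eps^m)$ for all $m$ by the Schwartz tail of $\FT{F_\eps}$; hence $\iota_T(s)$ is negligible near $x$ and support is preserved. Proper supportedness of $T_\eps=P_\eps$ lets the same operators act on all of $\D'(M:E)$ and commute with restriction, so $\iota_T$ extends to a sheaf morphism. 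For the regularity identity \eqref{MO}, the inclusion $\supseteq$ is immediate from (\ref{negligiblevb}), since a smooth section is mapped to its own class, which lies in $\maG^\infty$; the inclusion $\subseteq$ asserts that $\maG^\infty$-regularity of $\iota_T(s)$ forces $s$ to be smooth, and I would obtain it from the microlocal statement (5) via $\singsupp_g=\mathrm{pr}_1(\WFg)$.

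The genuine difficulty is property (5), the microlocal equality $\WF(s)=\WFg(\iota_T(s))$, and this is where I expect the main work to lie. The conceptual point is that, by finite speed of propagation, each $e^{isD}$ is a Fourier integral operator whose canonical relation is the time-$s$ Hamiltonian flow of the principal symbol of $D$; averaging against $\FT{F_\eps}$, which concentrates at $s=0$ as $\eps\to0$, keeps $F_\eps(D)$ microlocally close to the identity and, in particular, makes its kernel microlocally supported on the diagonal. To run this rigorously I would use the pseudodifferential characterization $\WFg(u)=\bigcap_{Pu\in\maG^\infty}\operatorname{char}(P)$. For one inclusion, a cut-off $\vphi$ and cone $\Gamma$ witnessing $(x_0,\xi_0)\notin\WF(s)$, together with the pseudolocality of $F_\eps(D)$ (commuting $F_\eps(D)$ past a zeroth-order $P$ microsupported in $\Gamma$, with error controlled polynomially in $1/\eps$), show $\widehat{\vphi F_\eps(D)s}$ decays rapidly on $\Gamma$ with moderate $\eps$-bound, giving $\WFg(\iota_T s)\subseteq\WF(s)$; the reverse inclusion is the contrapositive, deducing classical microlocal smoothness of $s$ from $\maG^\infty$-regularity of its regularization, again via commutation with an operator $P$ elliptic on $\Gamma$ and the estimates of (\ref{moderatevb})--(\ref{negligiblevb}). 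The crux throughout is to keep all commutator errors moderate in $\eps$ while exploiting that the $s$-integral in \eqref{FIF} is effectively localized near $0$; this is precisely the interplay between the Schwartz localization of $\FT{F}$ and the finite propagation speed $C_D$.
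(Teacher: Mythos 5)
Your handling of properties (1)--(3) and of support preservation is correct, but it follows a genuinely different route from the paper. The paper first proves the theorem on a \emph{compact} manifold: moderateness and the approximate-identity property come from viewing functional calculus as a continuous ring map $\phi_D\colon\S(\R)\to\Psi^{-\infty}(M:E)$ into the Fr\'echet algebra of smoothing operators; negligibility on smooth sections is obtained from Weyl's law for the spectrum of $D^2$; and the identity \eqref{MO} follows from a quantitative Sobolev statement (\cite[Lemma 7.4]{D}: if $u\notin H^k(M:E)$ for all $k>t$, then $\|F_\eps(D)u\|^2_{L^2}$ is not $O(\eps^{\dim M/2+t+\delta})$), which is strictly stronger than \eqref{MO}. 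The general complete case is then reduced to the compact one by the doubling construction ($U\subset X$, the double $DX$, the operator $D_X$, Proposition \ref{sheaf}), with finite propagation speed guaranteeing that $F_\eps(D)u$ and $F_\eps(D_X)u$ agree once the cutoff $\phi_c$ is inserted. You instead work directly on the complete manifold, trading factors $(1+\lambda^2)^{-k}$ against powers of $\eps$ in sup-norm via the spectral theorem, and you invoke finite propagation speed only through the splitting $\widehat{F_\eps}=\chi\widehat{F_\eps}+(1-\chi)\widehat{F_\eps}$ --- which is in fact exactly the paper's device \eqref{tepsdef} for producing properly supported operators, and you were right that this correction to the bare $F_\eps(D)$ is needed to meet Definition \ref{bundle_embedding}. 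Your route is more elementary and self-contained (no Weyl asymptotics, no doubling); the paper's route buys discreteness of the spectrum, a sharper dimension-dependent Sobolev rate, and --- not insignificantly --- the fact that on a compact manifold the $D$-Sobolev scale $\|(1+D^2)^{k/2}\cdot\|_{L^2}$ is manifestly equivalent to the usual one, an identification your direct argument uses implicitly for $\E'(M:E)\subset H^{-m}$ on a noncompact manifold and which deserves justification there.

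The one genuine gap is property (5), and it propagates. You derive the inclusion $\iota_T(\D'(M:E))\cap\G^\infty(M:E)\subseteq\Gamma^\infty(M:E)$ from the wavefront-set equality (5) via $\singsupp_g=\mathrm{pr}_1(\WFg)$, but (5) is precisely the property you only sketch: the FIO description of $e^{isD}$, pseudolocality of $F_\eps(D)$, and commutation with zeroth-order operators $P$ up to moderate errors are all asserted, none carried out, and they are the hardest part of the theorem --- the paper itself defers them to \cite[Th.\ 3.10]{DHK:10}. Because you route \eqref{MO} through (5), your proof of property (4) is incomplete as well, whereas the paper's Sobolev-characterization argument establishes (4) independently of (5). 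A repair that stays within your framework: prove \eqref{MO} directly by the same spectral mechanism you use for (1)--(3), showing that $\G^\infty$-bounds on $\iota_T(s)$ force $\|F_\eps(D)(\chi s)\|_{L^2}$-asymptotics incompatible with $\chi s\notin H^k$ for suitable cutoffs $\chi$; this decouples (4) from the microlocal statement and leaves (5) as the single place where the deeper analysis of \cite{DHK:10} is needed.
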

The essential idea of the proof boils down to the following two steps.

First we prove  Theorem \ref{operator_embedding} under the assumption that  the underlying  manifold $M$ is compact. We then use finite propagation speed to extend the result to a general complete Riemannian manifold.

  Thus let us for the time being assume that $M$ is a closed  manifold. Then the space of smoothing operators $\Psi^{-\!\infty}(M:E)$ is a Frech\'et algebra. 
 The functional calculus map defined by $D$  induces a smooth map $\phi_D:\Sch(\RR)\rightarrow \smoothing(M:E)$, hence the estimate \ref{moderatevb} holds because $F_{\ep}$ is a moderate net and $F_{\ep}(D)=(\phi_D)_*(F_{\ep})$, where $(\phi_D)_*$ is the induced map on the asymptotic spaces $\maG_{\Sch(\RR)}\rightarrow \maG_{\smoothing(M)}$. Condition \ref{Identityvb} then follows as the ring map $\phi_D$ preserves approximate units.

When $M$ is compact the operator $D$ has discrete spectrum and the spectrum of $D^2$ satisfies  Weyl's law, namely
\begin{eqnarray}\label{weyl}
N_{D^2}(\lambda):=\#\{\lambda_i\in \operatorname{sp}(D^2)|~~\lambda_i\leq \lambda\}\sim C
\lambda^{\frac{\operatorname{dim}(M)}{2}}.
\end{eqnarray}
The Weyl estimates in conjunction with the fact that $F\equiv 1$ near the  origin now provides the compatibility condition \ref{negligiblevb} (see \cite{D}).

To describe the regularity of a distributional section  $u\in\D'(M:E)$ we first note that any distribution provides a map between two Frech\'et spaces, namely between the smoothing operators  $\smoothing (M:E)$ and the smooth sections  $\Gamma^{\infty}(M:E)$ by evaluation. More precisely, to any $u\in\maD'(M:E)$ we associate the map
\[
\Theta_u: \smoothing(M) \rightarrow\Gamma^{\infty}(M:E)\qquad
\Theta_u(T):=T(u).
\]
 The mapping properties of the maps $\Theta_u$ imply that if $u\not\in H^k(M:E)$ for every $k>t$  then given any $\delta>0$,
$\|F_{\varepsilon}(D)u\|^2_{L^2(M:E)}$ is not $O(\varepsilon^{\frac{\operatorname{dim~M}}{2}+t+\delta})$ (see \cite[Lemma 7.4]{D}).
This direct description of Sobolev regularity is in fact stronger than condition 4 and therefore,
\[\iota_{F_{\varepsilon}(D)}(\D'(M:E))\cap \maG^{\infty}(M:E)= \Gamma^{\infty}(M:E)).\]

The sheaf property of the embedding $\iota_{F_{\ep}(D)}$ as well as the proof of the result for the more general case of a not necessarily compact complete manifold  rely on the finite speed of propagation of $D$, to which we turn next.
\subsection{Finite speed of propagation}	
We shall reduce the computation of asymptotics on a complete Riemannian manifold to certain compact manifolds obtained as doubles of suitable compact submanifolds with boundaries.
If $X$ is a compact manifold with boundary,  a double of $X$, 
denoted here by $DX$ is a closed manifold obtained by  gluing two copies of $X$  along the boundary $\partial X$ (see, e.g., \cite{Ko}).   
If $X$ is a compact manifold with boundary embedded in a Riemannian manifold $M$ of the same dimension and 
if $U$ is an open subset of $M$ such that $\bar{U}\subset \operatorname{interior}(X)$, then 
there exists a 
Riemannian metric on $DX$ such that the inclusion $j:U\hookrightarrow  DX$ becomes an isometric embedding.  Moreover, for any vector bundle $E\rightarrow M$ there exists a corresponding
vector bundle $E_X\rightarrow DX$ such that $E_X|_U$ 
is canonically isomorphic to $E_{|_U}$. Also, there exists a symmetric  elliptic operator 
$D_X$ on $E_X$ that coincides with $D$ on $U$.

Let $u\in\mathcal{E}'(M:E)$ and fix a constant $c>0$.  Then by the Hopf Rinow theorem 
the open ball $U:=B_{2c\cdot C_D}(\operatorname{supp}(u))$ (with $C_D$ as defined in (\ref{cd}))
is relatively compact and therefore 
contained in a compact manifold  
with boundary $X\subseteq M$. By the above $u$ can be identified with a distributional section of $E_X\rightarrow  DX$.

\begin{prop}\label{sheaf} With assumptions  on $u,c$ and $F$ as above $F(D)u$ and $F(D_X)u$ are both supported in $U$ and 
\[F(D)u=F(D_X)u.\]
\end{prop}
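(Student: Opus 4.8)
The plan is to reduce the statement to a comparison of the two unitary groups $e^{isD}$ and $e^{isD_X}$ via the Fourier inversion formula \eqref{FIF}, and then to invoke the finite speed of propagation $C_D$ to see that only a bounded range of times $s$ contributes and that on this range the two evolutions agree on $U$. Concretely, I would first write
\[
F(D)u=\frac{1}{2\pi}\int_\RR\hat F(s)\,e^{isD}u\,ds,\qquad
F(D_X)u=\frac{1}{2\pi}\int_\RR\hat F(s)\,e^{isD_X}u\,ds,
\]
where in the second integral $u$ is viewed as a section of $E_X\to DX$ via the identification over $U$, and set $w(s):=e^{isD}u$ and $w_X(s):=e^{isD_X}u$; each solves the transport equation \eqref{transport_eqn} for the respective operator with initial datum $u$.

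The geometric heart of the argument is finite speed of propagation: the bound $C_D$ on the principal symbol forces $\operatorname{supp} w(s)\subseteq B_{|s|C_D}(\operatorname{supp} u)$, and since $D_X$ coincides with $D$ on $U$ we have $C_{D_X}=C_D$ there and likewise $\operatorname{supp} w_X(s)\subseteq B_{|s|C_D}(\operatorname{supp} u)$ inside $DX$. Hence for $|s|\le c$ both supports lie in $B_{cC_D}(\operatorname{supp} u)\subset\subset U=B_{2cC_D}(\operatorname{supp} u)$, strictly inside the region where $D=D_X$, where $j\colon U\hookrightarrow DX$ is isometric, and where $E$ and $E_X$ are canonically identified. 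On this domain of dependence the two evolutions must coincide: for $|s|\le c$ the sections $j_*w(s)$ and $w_X(s)$ solve one and the same equation $\partial_s(\,\cdot\,)=iD_X(\,\cdot\,)$ on $DX$ with the same initial datum and with supports that never leave $U$, so the uniqueness obtained from the energy estimates mentioned after \eqref{transport_eqn} gives $j_*w(s)=w_X(s)$ for $|s|\le c$. Feeding this back into the two integrals, and using that the contributing part is supported in $B_{cC_D}(\operatorname{supp} u)\subset U$, yields simultaneously that $F(D)u$ and $F(D_X)u$ are supported in $U$ and that they agree there.

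The step that really has to be justified is the restriction to $|s|\le c$. Because $F$ is Schwartz with $F\equiv 1$ near the origin, $\hat F$ cannot be compactly supported (Paley--Wiener), so a priori the integrals run over all of $\RR$ and the tail $|s|>c$ both pushes mass out of $U$ and separates $e^{isD}$ from $e^{isD_X}$. This is the main obstacle, and it is precisely here that the hypothesis on $F$ must be used: either $\hat F$ is genuinely supported in $[-c,c]$, in which case the tail is absent and the preceding two steps give the identity on the nose, or one exploits the rescaling $\widehat{F_\varepsilon}(s)=\varepsilon^{-1}\hat F(s/\varepsilon)$ governing the actual regularization, for which the mass concentrates at times $|s|=O(\varepsilon)$ while the tail $|s|>c$ contributes a term that is $O(\varepsilon^N)$ for every $N$. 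In the latter reading the support statement and the equality hold up to a negligible remainder, which is exactly the asymptotic form in which the proposition feeds into the moderateness and $\maG^\infty$-estimates of the previous subsection.
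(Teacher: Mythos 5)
Your core argument is exactly the paper's own: represent both operators via the Fourier inversion formula \eqref{FIF}, use finite propagation speed to confine $e^{isD}u$ and $e^{isD_X}u$ to $U$ for $|s|\le c$, and invoke uniqueness for the transport equation \eqref{transport_eqn} to identify the two evolutions on that time range; the paper compresses precisely these steps into two sentences.

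The obstacle you isolate at the end is genuine, and the paper's proof is silent about it. As you note, Paley--Wiener forbids $\FT{F}$ from having compact support when $F$ is Schwartz with $F\equiv 1$ near $0$, so the times $|s|>c$ really do contribute: already for $M=\R$, $D=-i\,d/dx$, $u=\delta_0$ one gets $F(D)u=\tfrac{1}{2\pi}\FT{F}(-\,\cdot\,)$, which is not supported in $U$. Hence the proposition is literally correct only under your first reading, $\supp \FT{F}\subseteq[-c,c]$, and that is in fact how the paper uses it: immediately after the proposition a cutoff $\phi\in\D(-c,c)$ is inserted under the integral, and the operators \eqref{tepsdef} actually employed carry the factor $\phi_c$, so the proposition is only ever applied to functions whose Fourier transform is $\phi\cdot\FT{F_\eps}$, compactly supported in $(-c,c)$. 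Your second, asymptotic reading is exactly what accounts for the (negligible) discrepancy between $F_\eps(D)$ and these cutoff operators. In short: correct proof, same method, and your diagnosis of the hypothesis actually needed is accurate.
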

\begin{proof}
The restrictions of $D$  and $D_X$  to the open set $U$ coincide, hence uniqueness of solutions to \eqref{transport_eqn}  implies that $e^{isD}u$ and $e^{isD_X}u$   agree for $s\le c$.  Hence the
claim follows from the Fourier Inversion Formula  \eqref{FIF}.
\end{proof}
From these observations the proof of Th.\ \ref{operator_embedding} can be deduced along 
the following lines (cf.\ \cite[Sec.\ 4]{DHK:10}):

% \begin{proof}[Proof of Theorem \ref{operator_embedding}]%{proof} 
Let $\phi\in \D(-c,c)$ satisfy $\phi\equiv 1$ in a neighborhood of $0$ and let $u\in \E'(M:E)$. Then
 % \begin{align*}
$$
 F_{\varepsilon}(D)u =\frac{1}{2\pi}\int^{\infty}_{-\infty}\phi(s)\hat{F_\eps}(s)e^{isD}u\,ds
  =j^*\left(\frac{1}{2\pi}\int^{\infty}_{-\infty}\phi(s)\hat{F_\eps}(s)e^{isD_X}u\,ds\right)
$$
%   \end{align*}
% in $\maG(M:E)$.  Here, as introduced above,  $[u_{\ep}]$ denotes the class of a net $u_{\ep}$ in the asymptotic space $\maG(M:E)$.
 
Based on this calculation we observe that the estimates  required for Definition \ref{bundle_embedding} in the general case of $F_{\ep}(D)$ follow from that of $F_{\ep}(D_X)$ which have already been established in case of the closed manifold $DX$. Furthermore the sheaf properties of $\iota_{F_{\ep}(D)}$ can be obtained as a consequence of the calculations in Proposition \ref{sheaf}.   
     The support of $u$ coincides with the generalized support of $[F_{\ep}(D)u]$.  This implies that the embedding extends  to a sheaf morphism $i_{F_{\varepsilon}}:\D'(M:E)\rightarrow  \maG(M:E)$.
Preservation of wavefront sets is more involved. We refer to \cite[Th.\ 3.10]{DHK:10} for a
complete proof of this property.  
% \end{proof}
\begin{rem}
In the scalar case, an alternative proof (not relying on the above doubling-technique) of Theorem \ref{operator_embedding} can be found in \cite{DHK:10}, Section 3.
\end{rem}

\subsection{Applications} As a first application consider $E=\bigwedge^*M$, the exterior bundle over a Riemannian manifold $M$ and $D=d+d^*$ where $d^*$ is the Hodge adjoint of the de Rham differential $d$. Then $D$ is symmetric, elliptic and has propagation speed $C_D=1$ since $\sigma_D(x,\xi)^2= -\|\xi\|^2\mathrm{id}$, hence is essentially self-adjoint. Letting $\Delta:= D^2$ be the
Laplace-Beltrami operator on $\Omega^*(M)$, it follows from functional calculus that
$\cos(sD) = \cos(s\sqrt{\Delta})$ on $L^2(M)$.

Now let $F$ be an even Schwartz function which has germ $1$ at the origin. Then
$$
F(\sqrt{\Delta})=\frac{1}{2\pi}\int_{-\infty}^{\infty} \hat{F}(s)\cos(s\sqrt{\Delta})\,ds
$$ 
as a Bochner integral in $\mathcal{B}(L^2(M))$.

Finally, let $c>0$ and pick an even test function $\phi_c$ with support in $(-2c,2c)$ and
such that $\phi_c \equiv 1$ on $(-c,c)$. Now set
\begin{equation} \label{tepsdef}
T_\eps(\sqrt{\Delta}) := \frac{1}{2\pi} \int_{-\infty}^\infty \phi_c(s) (F_\eps)^\wedge(s) \cos(s\sqrt{\Delta})\, ds.
\end{equation}
Then each $T_\eps(\sqrt{\Delta})$ is a properly supported smoothing operator and
$(T_\eps(\sqrt{\Delta}))_{\eps\in I}$ is an optimal regularization process in the sense of
Def.\ \ref{bundle_embedding}.
% and pick some
%let $G(x) $ be such that $F(x)=G(x^2)$.
% Since the restriction of $D^2$ on the scalar functions is $-\Delta$, the  negative of the Laplace-Beltrami operator, we obtain the following result.
% \begin{cor}
% Given an even Schwartz function $F\equiv 1$ near origin and the Laplace-Beltrami operator $\Delta$ on a complete Riemannian manifold $M$, the regularizing process determined by the operators 
% \[F_{\ep}(\sqrt{-\Delta})=\frac{1}{2\pi\ep}\int_{\RR}\hat{F}(\frac{s}{\ep})\operatorname{cos}(s\sqrt{-\Delta})ds\]
% satisfies the conditions of Definition \ref{bundle_embedding}.
% \end{cor}
From the construction of $T$ in terms of the functional calculus of the Laplace operator we
conclude the following invariance properties of the corresponding embedding $\iota_T$:
\begin{cor} \
\begin{itemize} 
\item[(i)] Let $f:M\to M$ be an isometry. Then for any $u\in \D'(M)$, $\iota_T(f^*u) = f^*\iota_T(u)$.
\item[(ii)] If $\Psi$ is a pseudodifferential operator commuting with $\Delta$, then $\Psi$
commutes with $\iota_T$.
\end{itemize}
\end{cor}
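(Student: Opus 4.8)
The plan is to reduce both assertions to a single elementary fact: an operator that commutes with $\Delta$ (on smooth functions) commutes with the wave propagator $\cos(s\sqrt{\Delta})$, hence with the truncated functional calculus defining $T_\eps(\sqrt{\Delta})$, and therefore — after passing to classes in $\maG(M)$ — with $\iota_T$. As a preliminary reduction I would note that $T_\eps(\sqrt{\Delta}) = G_\eps(\sqrt{\Delta})$, where $G_\eps(\lambda) := \frac{1}{2\pi}\int \phi_c(s)(F_\eps)^\wedge(s)\cos(s\lambda)\,ds$ is an even function (since $\phi_c$, $(F_\eps)^\wedge$ and $\cos$ are even), so that $T_\eps(\sqrt{\Delta})$ is genuinely a function of $\Delta$. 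Thus it suffices to establish commutation with each $\cos(s\sqrt{\Delta})$ and then integrate against the fixed density $\phi_c(s)(F_\eps)^\wedge(s)\,ds$, the integral converging in $\mathcal{B}(L^2(M))$.

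For (i), an isometry $f$ preserves the metric, whence $f^*\Delta = \Delta f^*$ and $f^*$ is unitary on $L^2(M)$. I would then invoke the wave-equation characterization of the propagator: $w(s) := \cos(s\sqrt{\Delta})u_0$ is the unique solution of $\partial_s^2 w + \Delta w = 0$ with $w(0) = u_0$, $\partial_s w(0) = 0$. Applying $f^*$ and using $f^*\Delta = \Delta f^*$ shows that $f^* w$ solves the same Cauchy problem with data $f^* u_0$, so $f^*\cos(s\sqrt{\Delta}) = \cos(s\sqrt{\Delta})f^*$ by uniqueness; integration then gives $f^* T_\eps(\sqrt{\Delta}) = T_\eps(\sqrt{\Delta})f^*$ as operators, for every $\eps$. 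Since $f^*$ acts on $\maG(M)$ componentwise on representatives (by functoriality of $X\mapsto\maG_X$ applied to the topological isomorphism $f^*\colon\cinfty(M)\to\cinfty(M)$) and $\iota_T(u) = [T_\eps(\sqrt{\Delta})u]$, I conclude $f^*\iota_T(u) = [f^* T_\eps(\sqrt{\Delta})u] = [T_\eps(\sqrt{\Delta})f^* u] = \iota_T(f^* u)$. Because the operator identity holds exactly (not merely modulo negligible nets), the passage to the quotient is automatic.

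For (ii), the very same uniqueness argument applies verbatim with $f^*$ replaced by a properly supported pseudodifferential operator $\Psi$ satisfying $\Psi\Delta = \Delta\Psi$: then $\Psi w$ solves the wave equation with data $\Psi u_0$, so $\Psi\cos(s\sqrt{\Delta}) = \cos(s\sqrt{\Delta})\Psi$ and, after integration, $\Psi T_\eps(\sqrt{\Delta}) = T_\eps(\sqrt{\Delta})\Psi$. Since $\Psi$ is properly supported it maps $\cinfty(M)\to\cinfty(M)$ continuously and hence induces $\Psi_*$ on $\maG(M)$ componentwise; the identity $\Psi\iota_T(u) = \iota_T(\Psi u)$ then follows exactly as in part (i).

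The main obstacle is the unboundedness issue lurking in (ii): a pseudodifferential operator of positive order need not be $L^2$-bounded, so one cannot simply appeal to the spectral theorem (operators commuting with a self-adjoint $\Delta$ commute with its bounded Borel functions). This is precisely why I route the argument through uniqueness for the wave equation, where only commutation with $\Delta$ on smooth data is used. What remains to be checked carefully is that $\Psi$ may be interchanged with the $\mathcal{B}(L^2)$-valued integral defining $T_\eps(\sqrt{\Delta})$; this is justified by the proper support of $\Psi$ together with the smoothing property of $T_\eps(\sqrt{\Delta})$, which keep all terms within the Fréchet spaces on which $\Psi$ acts continuously.
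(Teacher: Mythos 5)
Your proposal is correct, but it takes a genuinely different (and more careful) route than the paper, which proves this corollary in a single sentence: the invariance properties are simply read off from the fact that $T_\eps$ is constructed through the functional calculus of $\Delta$ --- an isometry induces a unitary on $L^2(M)$ conjugating $\Delta$ into itself, hence commuting with every bounded Borel function of $\Delta$, in particular with $T_\eps(\sqrt{\Delta})$, and the case of $\Psi$ is disposed of by the same appeal to naturality of the functional calculus. You instead reduce everything to commutation with the individual propagators $\cos(s\sqrt{\Delta})$, proved via uniqueness for the Cauchy problem $\d_s^2 w + \Delta w = 0$, and then integrate against $\phi_c(s)(F_\eps)^\wedge(s)\,ds$. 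This extra care pays off precisely in part (ii): for a pseudodifferential operator of positive order, commutation with $\Delta$ on smooth functions does not by itself yield commutation with the spectral measure (the unbounded-commutant subtlety you correctly flag), so the wave-equation detour --- keeping all data smooth and, by finite propagation speed, spatially compactly supported, so that energy-estimate uniqueness applies --- is exactly what is needed to make the paper's one-line justification rigorous; it also forces the proper-support hypothesis on $\Psi$ to be made explicit, without which $\Psi$ does not even act on $\Cinf(M)$, hence not on $\G(M)$. Two steps remain routine but unstated in your write-up: the operator identities $f^* T_\eps = T_\eps f^*$ and $\Psi T_\eps = T_\eps \Psi$, established on $\D(M)$, must be extended to $\E'(M)$ by weak density (both sides are weakly continuous, since $T_\eps$ has a smooth properly supported kernel and $f^*$, $\Psi$ have well-behaved transposes) before representatives of $\iota_T(u)$ can be compared, and the passage from $\E'(M)$ to all of $\D'(M)$ uses that $\iota_T$ is defined there by sheaf extension, which is compatible with properly supported operators.
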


For the special case $\R^n$ with the Euclidean metric the above construction gives 
$$
  T_\eps u = \mu_\eps * u, \quad \text{with } \quad 
    \FT{\mu_\eps}(\xi) = \frac{1}{2\pi} \F\big(\phi \frac{1}{\eps} 
      \FT{F}(\frac{.}{\eps})\big)(|\xi|).
$$
In particular, for the one-dimensional case $n=1$ we obtain $\mu_\eps = \phi_c \cdot \FT{F}(./\eps)/(2 \pi \eps)$. Note that $\FT{F}(./\eps)/(2 \pi \eps)$ is a standard mollifier, i.e., a Schwartz function with unit integral and all higher moments vanishing. Thus the regularization
process reduces precisely to the usual Colombeau embedding via convolution (\cite{C2,O,GKOS}).

 A further  consequence of the above construction is that  it relates naturally to isomorphisms of vector bundles. Thus let $\phi:E_1\rightarrow E_2$ be a vector-bundle isomorphism (covering the identity map on $M$) and let $D_1$ be an order  one admissible operator on $E_1$. We  choose a Hermitian structure on $E_2$ which makes $\phi$ an isometry. Then the push-forward operator $D_2:=\phi D_1\phi^{-\!1}$ is isospectral to $D_1$.  The naturality of functional calculus then gives that for any Schwartz function $F$ with $F\equiv 1$ near the origin the embedding and the bundle map $\phi$  commute, that is 
 \[\phi\circ F_{\ep}(D_1)=F_{\ep}(D_2)\circ\phi.\]
 We note that if $r_1+s_1=r_2+s_2$ then any Riemannian metric provides an isomorphism  of the tensor bundles $\mathfrak{T}^{r_1}_{s_1}\rightarrow \mathfrak{T}^{r_2}_{s_2}$  by `raising  or lowering of indices'. Thus if  we pick the  connection Laplace operators on the tensor bundles and an even Schwartz function $F$ as above we obtain a regularization process that is well behaved with respect to raising and lowering of indices.

\section{Regularization on globally hyperbolic space-times}

We now return to the situation described at the end of Section \ref{intro}.
Thus, throughout this section, $(M,g)$ will be a smooth space-time, i.e., a connected time-oriented Lorentz manifold. We first review the concept of metric splitting for globally hyperbolic 
space-times. Building on this we construct invariant regularizations of distributions 
on smooth globally hyperbolic space-times.

\subsection{The metric splitting of globally hyperbolic space-times and associated Riemann metrics}
The original definition, due to J. Leray of global hyperbolicity of a space-time $M$
appeared in \cite{L53}. It requires that the set of causal curves connecting two points $p$,
$q\in M$ be compact in the space of all rectifiable paths with respect to a suitable metric topology (\cite[Ch.\ XII, Sec.\ 8, 9]{CB}). In what follows, we will use an equivalent definition,
cf.\ \cite[Sec.\ 6.6]{HE}, and \cite[Ch.\ XII, Th.\ 10.2]{CB}.

 Thus we call a space-time \emph{globally hyperbolic} if it satisfies (a) strong causality and (b) for any $p,q \in M$ the intersection $J^+(p) \cap J^-(q)$ of the causal future $J^+(p)$ of $p$ with the causal past $J^-(q)$ of $q$ is a compact subset of $M$. Thanks to \cite{BS:07} condition (a) may be weakened to causality, i.e., non-existence of closed time-like curves.

As shown by Geroch (cf.\ \cite{Geroch:70}) global hyperbolicity is equivalent to the existence of a Cauchy hypersurface and in turn provides a foliation of $M$ by Cauchy hypersurfaces. Further development of these constructions and techniques led to the following result due to Bernal-S\'{a}nchez (cf.\ \cite{BS:05}) on the so-called metric splitting of a globally hyperbolic space-time $(M,g)$: 
There exists a Cauchy hypersurface $S$ in $M$ and an isometry of $(M,g)$ with the Lorentz manifold $(\R\times S, \la)$ with Lorentz metric $\la$ given by
\beq\label{lambda}
    \la = - \be\, dt^2 + h_t,
\eeq
where $\be \in \Cinf(\R\times S)$ is positive, $(h_t)_{t\in\R}$ is a smoothly parametrized family of Riemannian metrics on $S$, and $t$ denotes (slightly ambiguously) both the global time function $(t,x) \mapsto t$ and its values. In other words, in order to construct a regularization (or embedding) for distributions on the globally hyperbolic Lorentzian manifold $(M,g)$ we may as well %work on its isometric counterpart 
assume that $(M,g) = (\R\times S, \la)$. 

The specific structure of the Lorentz metric $\la$ in \eqref{lambda}  suggests to associate with it the Riemann metric 
\beq\label{rho}
  \rho := \be\, dt^2 + h_t
\eeq  
on $\R \times S$ and to simply use the regularization and embedding of distributions on $\R\times S$ based on this Riemann metric $\rho$. The regularization construction on Riemannian manifolds described above requires completeness of the Riemann metric $\rho$. It may happen that $\rho$ is not complete, however we may then introduce an appropriate conformal factor to obtain a complete Riemannian metric (cf.\ \cite{NO:61}). Note that the latter would amount to introducing the exact same conformal factor for the Lorentz metric $\la$ and would not change the class in the so-called causal hierarchy of space-times according to Minguzzi-S\'{a}nchez \cite{MS:08}. Therefore we assume henceforth that $\rho$ is complete.

Denoting by $\Delta_\rho$ the Laplacian w.r.t.\ $\rho$, for any $s\in \R$ and any $u\in \D(M)$ we set
$$
\|u\|_s := \| (1+\Delta_\rho)^{s/2} u \|_{L^2(M)}.
$$
The Sobolev space $H^s(M)$ of order $s$ is the completion of $\D(M)$ with respect to this norm. 

\begin{ex} (i) Consider $S = \R$ and $\la = - dt^2 + \frac{dx^2}{1 + t^6 x^6}$ as Lorentz metric on $\R^2$. It is elementary to check that $(\R^2,\la)$ is globally hyperbolic, e.g., by showing that $\{0\} \times \R$ is a Cauchy hypersurface. The associated Riemann metric $\rho =  dt^2 + \frac{dx^2}{1 + t^6 x^6}$ is not complete, since the hypersurfaces $\{t\} \times \R$ with $t \neq 0$ are closed and bounded but not compact in $(\R^2,\rho)$. Multiplying $\rho$ by the function $\alpha \in \Cinf(\R^2)$, $\alpha(t,x) = 1 + t^6 x^6$, yields a conformal metric which is complete (by the Hopf-Rinow theorem, since lengths of curves w.r.t.\ $\alpha\cdot \rho$ are greater than or equal to their Euclidean lengths).

(ii) Let $(S,h_0)$ be a connected Riemannian manifold. For the Robertson-Walker space-time $\R \times S$ with  Lorentz metric of the form $\la = -dt^2 + f(t)^2 h_0$, where $f \in \Cinf(\R)$ is positive, we have (cf.\ \cite[Lemma A.5.14]{BGP:07}): 

\centerline{$(\R\times S,\la)$ is globally hyperbolic if and only if $(S,h_0)$ is complete.} 

\noindent If this is the case, then the corresponding Riemannian metric $\rho = dt^2 + f(t)^2 h_0$ on $\R\times S$ is complete (see \cite{ON:83}, Lemma 7.40). 

\end{ex}

\subsection{Regularization and embedding via the associated Riemannian structure}

Let $(T_\eps)_{\eps \in I}$ be the regularization (i.e., family of properly supported smoothing operators) and $\iota \col \D'(\R\times S) \hookrightarrow \G(\R\times S)$ be the embedding associated with the complete Riemann metric $\rho$ as in (\ref{tepsdef}). We recall that the embedding  thus respects 
the differential algebraic structure of $\Cinf(\R\times S)$ and the wave front sets of distributions in the strong sense of generalized functions, that is, with respect to equality on the level of $\G(\R\times S)$. Furthermore, $\iota$ is also invariant under isometries of the Riemannian structure and commutes with the action of the Laplace operator $\Delta_\rho$ corresponding to $\rho$ on distributions and generalized functions, respectively.

Let $u \in \D'(\R\times S)$. Then according to \eqref{lambda}
the difference between the wave operator $\Box_\lambda$ and the Laplace operator $\Delta_\rho$ acts on $u$ as
\beq\label{Theta}
    \Box_\la u - \Delta_\rho u = 
   \frac{-2}{\sqrt{\be \det h_t}} \,\d_t 
   \Big( \sqrt{\frac{\det h_t}{\be}} \,\d_t u \Big) := - 2 \Theta u. 
\eeq

From this equation and the commutation property of the embedding $\iota$ with $\Delta_\rho$ we obtain for any $u \in \D'(\R\times S)$
\beq\label{commutators}
  \iota(\Box_\la u) - \Box_\la \iota(u) = 
  \iota((\Delta_\rho - 2 \Theta)u) - (\Delta_\rho - 2 \Theta) \iota(u) = 
  2 \big(\Theta \iota(u) - \iota(\Theta u)\big).
\eeq
Thus, the precise invariance properties of the embedding with respect to the wave operator can be reduced to investigating the corresponding behavior upon interchanging $\iota$ with $\Theta$. 

As a first simple, but useful, observation we point out that $\iota$ commutes with $\Box_\la$ in the distributional sense (i.e., in the sense of association in $\G$), which follows from the fact that $\Box_\la$ is a differential operator with smooth symbol. This proves the following statement.

\begin{prop} For any $u\in \D'(\R\times S)$ we have $\iota(\Box_\la u) \approx \Box_\la\, \iota(u)$.
\end{prop}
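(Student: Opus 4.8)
The plan is to show that the two nets representing $\iota(\Box_\la u)$ and $\Box_\la\iota(u)$ admit one and the same distributional shadow, namely $\Box_\la u$ itself; their difference then tends to zero in $\D'(\R\times S)$, which is precisely the assertion $\iota(\Box_\la u) \approx \Box_\la\iota(u)$. Writing $\iota(w) = [T_\eps w]$ with $T_\eps = T_\eps(\sqrt{\Delta_\rho})$ as in \eqref{tepsdef}, the two representatives are $T_\eps(\Box_\la u)$ and $\Box_\la(T_\eps u)$, so it suffices to prove that each converges to $\Box_\la u$ in $\D'(\R\times S)$.

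First I would upgrade the approximate identity property \ref{Identityvb}, stated there only for compactly supported distributions, to the statement $T_\eps w \to w$ in $\D'(\R\times S)$ for \emph{every} $w \in \D'(\R\times S)$. Since each $T_\eps(\sqrt{\Delta_\rho})$ is assembled from the self-adjoint family $\cos(s\sqrt{\Delta_\rho})$ with a real even cut-off, it is formally self-adjoint, so for any test form $\phi$ one has $\langle T_\eps w, \phi\rangle = \langle w, T_\eps\phi\rangle$. By finite speed of propagation the cut-off $\phi_c$ in \eqref{tepsdef} confines $\supp(T_\eps\phi)$ to a fixed neighbourhood of $\supp\phi$, uniformly in $\eps$, while the negligibility property \ref{negligiblevb} forces $T_\eps\phi\to\phi$ in every seminorm; hence $T_\eps\phi \to \phi$ in $\D(\R\times S)$ and $\langle w, T_\eps\phi\rangle \to \langle w, \phi\rangle$. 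Applying this with $w = \Box_\la u \in \D'(\R\times S)$ yields $T_\eps(\Box_\la u) \to \Box_\la u$.

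For the second net I would use that $\Box_\la$ is a differential operator with smooth coefficients and therefore acts continuously on $\D'(\R\times S)$. The previous step applied to $w = u$ gives $T_\eps u \to u$ in $\D'$, and continuity of $\Box_\la$ then produces $\Box_\la(T_\eps u) \to \Box_\la u$ in $\D'$. Subtracting the two limits leaves $T_\eps(\Box_\la u) - \Box_\la(T_\eps u) \to 0$ in $\D'(\R\times S)$, which is the claim. Equivalently, one may invoke \eqref{commutators} and reduce to verifying $\Theta\iota(u) \approx \iota(\Theta u)$; the identical argument applies verbatim, since the operator $\Theta$ of \eqref{Theta} again has smooth coefficients.

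The one point demanding care—and the main obstacle—is the passage from property \ref{Identityvb} on $\E'(\R\times S)$ to all of $\D'(\R\times S)$: one must ensure that $T_\eps\phi$ remains inside a single compact set as $\eps\to 0$, so that the convergence of $T_\eps\phi$ to $\phi$ takes place in $\D(\R\times S)$ rather than merely in $\Cinf_{\loc}$. This is exactly where the proper support of $T_\eps$, guaranteed by the finite-propagation cut-off $\phi_c$, is indispensable. Once this uniform support bound is secured, the remaining ingredients—self-adjointness of $T_\eps$ and sequential continuity of $\Box_\la$ on $\D'$—are entirely routine.
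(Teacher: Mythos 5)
Your proposal is correct and follows essentially the same route as the paper, whose one-line proof (``follows from the fact that $\Box_\la$ is a differential operator with smooth symbol'') is precisely the combination you spell out: $T_\eps$ is an approximate identity on $\D'(\R\times S)$ and $\Box_\la$, having smooth coefficients, is sequentially continuous for weak-$*$ convergence, so both representatives converge to $\Box_\la u$. Your extra step upgrading the approximate-identity property from $\E'$ to all of $\D'$ via the symmetry and uniform proper support of $T_\eps$ is exactly the detail the paper leaves implicit, not a different argument.
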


We will now show that under additional regularity assumptions on the distribution $u$ a stronger asymptotic property holds.
% The operator $\Theta$ defined in equation \eqref{Theta} above acts as a differential operator with respect to $t$ only but has smooth coefficients depending on all variables. In a first step we analyze the asymptotic properties of interchanging the regularization with  $t$-differentiation or with multiplication by a smooth function separately.   

\begin{thm} \label{asymptth} If $u \in H^{3}(\R \times S)$ is compactly supported then 
$$
\norm{\,[T_\eps, \Box_\la] u\,}{L^2} = 
\mathcal{O}(\eps^2)  \qquad (\eps \to 0).
$$ 
\end{thm}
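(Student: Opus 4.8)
The plan is to reduce the commutator $[T_\eps,\Box_\la]$ to the commutator $[T_\eps,\Theta]$ via the identity \eqref{commutators}: since $\iota$ (and hence each $T_\eps$) commutes \emph{exactly} with $\Delta_\rho$, we have on representatives
\begin{equation*}
  [T_\eps,\Box_\la]u = -2\,[T_\eps,\Theta]u,
\end{equation*}
so it suffices to establish $\norm{[T_\eps,\Theta]u}{L^2}=\mathcal{O}(\eps^2)$ for compactly supported $u\in H^3$. Here $\Theta$ is the second-order operator in $\d_t$ defined in \eqref{Theta}, whose coefficients are smooth functions of $(t,x)$ built from $\be$ and $\det h_t$; crucially $\Theta$ is \emph{not} a polynomial in $\Delta_\rho$, which is exactly why the commutator need not vanish.

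The key analytic input is the structure of $T_\eps = T_\eps(\sqrt{\Delta_\rho})$ from \eqref{tepsdef}, namely $T_\eps = \frac{1}{2\pi}\int \phi_c(s)(F_\eps)^\wedge(s)\cos(s\sqrt{\Delta_\rho})\,ds$. Because $F$ has germ $1$ at the origin, the symbol $T_\eps(\la)$ approximates $1$ to infinite order near the spectral origin and, after rescaling $s\mapsto \eps s$, one sees that $T_\eps$ acts on the scale of Sobolev spaces as a smoothing operator whose deviation from the identity is controlled by powers of $\eps$ times powers of $\sqrt{\Delta_\rho}$. The first step is therefore to record a \emph{mapping estimate}: for a compactly supported $v\in H^k$ one has $\norm{(T_\eps - I)v}{H^{k-2}} = \mathcal{O}(\eps^2)\norm{v}{H^k}$, i.e.\ each factor of $\eps^2$ is purchased at the cost of two derivatives. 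This is the $L^2$-operator-norm analogue, localized by proper support, of the Weyl-type asymptotics invoked for condition \ref{negligiblevb}; one extracts it by writing $T_\eps - I = \psi_\eps(\Delta_\rho)$ with $\psi_\eps(\la) = T_\eps(\sqrt\la)-1$ vanishing to high order at $\la=0$, so that $\la^{-1}\psi_\eps(\la)$ is bounded by $\mathcal{O}(\eps^2)$ uniformly in $\la\ge 0$, giving $\norm{(T_\eps-I)(1+\Delta_\rho)^{-1}}{L^2\to L^2}=\mathcal{O}(\eps^2)$.

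With this estimate in hand the second step is the commutator expansion. I would write
\begin{equation*}
  [T_\eps,\Theta]u = (T_\eps-I)\Theta u - \Theta(T_\eps-I)u + \big(\Theta u - \Theta u\big),
\end{equation*}
more usefully splitting $[T_\eps,\Theta]u = (T_\eps-I)\Theta u - \Theta (T_\eps-I)u$ and estimating each term in $L^2$. The first term is $\mathcal{O}(\eps^2)$ directly from the mapping estimate applied to $\Theta u$, provided $\Theta u \in H^1$, which holds since $u\in H^3$ and $\Theta$ is order two with smooth (locally bounded, on the compact support) coefficients. For the second term, $(T_\eps-I)u = \mathcal{O}(\eps^2)$ in $H^1$ and $\Theta$ maps $H^1$ continuously into $H^{-1}$; but an $H^{-1}$ bound is too weak, so instead one applies $\Theta$ to $(T_\eps-I)u$ while borrowing regularity — since $u\in H^3$, $(T_\eps-I)u=\mathcal{O}(\eps^2)$ in $H^3$ as well (the mapping estimate also gives $\norm{(T_\eps-I)u}{H^s}=\mathcal{O}(\eps^2)\norm{u}{H^{s+2}}$ whenever $u\in H^{s+2}$), whence $\Theta(T_\eps-I)u=\mathcal{O}(\eps^2)$ in $H^1\subset L^2$. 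The exponent $3$ in the hypothesis is precisely the budget that lets \emph{both} terms keep a factor $\eps^2$ after $\Theta$ has consumed two derivatives, which explains the sharpness of the assumption.

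The main obstacle I anticipate is the second step's uniformity in the curved, $t$-dependent geometry: the coefficients of $\Theta$ and the smoothing operator $T_\eps$ both live on the complete Riemannian manifold $(\R\times S,\rho)$, so one must ensure the commutator estimate is genuinely global and that finite propagation speed (Proposition \ref{sheaf}) confines the analysis to a compact region determined by $\supp(u)$ and $c$. Concretely, the subtlety is that $T_\eps$ is built from $\sqrt{\Delta_\rho}$ whereas $\Theta$ differentiates only in the $t$-direction, so these do not commute even at the principal-symbol level; one cannot simply invoke a pseudodifferential commutator calculus on the noncompact manifold without care. The clean route is to use Proposition \ref{sheaf} to replace $\Delta_\rho$ by the Laplacian of a compact double $DX$ containing $\supp(u)$ together with its $c$-propagation collar, thereby reducing all estimates to the closed-manifold setting where the functional-calculus bounds of \cite{D} and the spectral estimate on $(T_\eps-I)(1+\Delta_\rho)^{-1}$ are available, and then transporting the resulting $\mathcal{O}(\eps^2)$ bound back via the isometric embedding $j$.
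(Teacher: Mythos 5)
Your reduction to $[T_\eps,\Theta]$ and your spectral mapping estimate $\norm{(T_\eps-I)v}{L^2}=\mathcal{O}(\eps^2)\norm{v}{H^2}$ are both correct (the latter does follow from $|m_\eps(\mu)-1|\le C\eps^2\mu^2$ for the spectral multiplier $m_\eps$ of $T_\eps$, since $F-1$ vanishes identically near $0$). The gap is in your second step: splitting $[T_\eps,\Theta]u=(T_\eps-I)\Theta u-\Theta(T_\eps-I)u$ and estimating the two terms \emph{separately} cannot give $\mathcal{O}(\eps^2)$ under the hypothesis $u\in H^3$. Your own mapping estimate trades exactly two derivatives for each factor $\eps^2$, so the claim ``since $u\in H^3$, $(T_\eps-I)u=\mathcal{O}(\eps^2)$ in $H^3$ as well'' is unjustified --- by the estimate you quote ($\norm{(T_\eps-I)u}{H^s}=\mathcal{O}(\eps^2)\norm{u}{H^{s+2}}$ with $s=3$) it would require $u\in H^5$. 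Moreover no better estimate exists: $\sup_\mu|m_\eps(\mu)-1|$ stays bounded away from $0$ as $\eps\to 0$ (take $\mu\sim 1/\eps$, where $F\neq1$), so any gain in powers of $\eps$ must be paid for in derivatives. Consequently, with $u\in H^3$ each of your two terms is genuinely only $\mathcal{O}(\eps)$: $\Theta u\in H^1$ gives $\norm{(T_\eps-I)\Theta u}{L^2}=\mathcal{O}(\eps)$, and $\norm{(T_\eps-I)u}{H^2}=\mathcal{O}(\eps)\norm{u}{H^3}$ gives $\norm{\Theta(T_\eps-I)u}{L^2}=\mathcal{O}(\eps)$. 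As written, your argument proves the theorem only under the stronger hypothesis $u\in H^4$.

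What the term-by-term splitting discards is precisely the commutator cancellation, and keeping it is the heart of the paper's proof. There one inserts the Bochner integral defining $T_\eps$ and studies $[\Theta,\cos(s\sqrt{-\Delta_\rho})]$ directly: writing $w(s)=\cos(s\sqrt{-\Delta_\rho})u$, applying $\Theta$ to the wave equation $\d_s^2w-\Delta_\rho w=0$ and invoking Duhamel's principle yields
$$
[\Theta,\cos(s\sqrt{-\Delta_\rho})]\,u=\int_0^s(s-r)\,\sinc\big((s-r)\sqrt{-\Delta_\rho}\,\big)\,[\Theta,\Delta_\rho]\cos(r\sqrt{-\Delta_\rho})u\,dr.
$$
The crucial point is that $[\Theta,\Delta_\rho]$ has order \emph{three} (the fourth-order parts cancel because the principal symbols commute), hence maps $H^3_{\mathrm{comp}}$ into $L^2$; combined with the operator-norm bound $|s-r|$ for the $\sinc$ factor, the commutator is $\mathcal{O}(s^2)\norm{u}{H^3}$ in $L^2$, and integration against $\eps^{-1}|\FT{F}(s/\eps)|\,|\phi_c(s)|$ produces the factor $\eps^2$. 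This one-order gain --- ``$\Theta\Delta_\rho-\Delta_\rho\Theta$ is third order, not fourth'' --- is exactly what your decomposition loses; to repair your proof you must either strengthen the hypothesis to $H^4$ or re-introduce the commutator $[\Theta,\Delta_\rho]$ as the paper does. (Incidentally, the reduction to a compact double in your last paragraph is not needed for this theorem: all bounds used are spectral-theorem bounds for the essentially self-adjoint operator $\Delta_\rho$ on the complete manifold, plus finite propagation speed to keep supports in a fixed compact set.)
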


\begin{proof} 
As in \eqref{commutators} we have the basic relation
$$
    [T_\eps,\Box_\la] u = 2 [\Theta, T_\eps] u.
$$
Now recalling the Bochner-integral defining the action of $T_\eps$ on $L^2$-functions we have
$$
  \Theta T_\eps u - T_\eps \Theta u = \frac{1}{2 \pi} \int_\R 
    \phi_c(s) \frac{1}{\eps} \FT{F}(\frac{s}{\eps}) 
    \Big( \Theta \cos(s \sqrt{- \Delta_\rho}) u - 
    \cos(s \sqrt{- \Delta_\rho}) \Theta u \Big)\, ds.
$$ 
Hence everything boils down to deriving asymptotic estimates for the commutator of $\Theta$ with the operator $S(s) := \cos(s \sqrt{- \Delta_\rho})$ ($s \in \R$). For any $v\in L^2(\R\times S)$ the function $w \in \Cinf(\R, L^2(\R\times S))$, $w(s) := S(s) v$ ($s \in \R$), is the mild solution to the following Cauchy problem on $\R \times \R \times S$:
\beq\tag{$*$}
   \d^2_s w - \Delta_\rho w = 0, \quad w(0) = v,\quad \d_s w(0) = 0.
\eeq
In the above integral formula the term $\cos(s \sqrt{- \Delta_\rho}) \Theta u$ corresponds to the solution with $v = \Theta u$, whereas the term $\Theta \cos(s \sqrt{- \Delta_\rho}) u$ is just the 
application of $\Theta$ 
to the solution $w(s)$ corresponding to $v = u$. Applying $\Theta$ to ($*$) we obtain 
$$
  \d_s^2 \Theta w - \Delta_\rho \Theta w = [\Theta, \Delta_\rho] w =: f
$$
and
$$
 \Theta w \mid_{s = 0} = \Theta(w(0)) = \Theta u, 
  \quad \d_s \Theta w \mid_{s=0} = \Theta (\d_s w(0)) = 0. 
$$
Therefore the Duhamel principle yields
\begin{multline*}
   \Theta \cos(s \sqrt{- \Delta_\rho}) u = 
   \Theta w(s)\\ =  \cos(s \sqrt{- \Delta_\rho}) \Theta u +
   \int_0^s (s-r) \sinc\big((s-r)\sqrt{-\Delta_\rho}\,\big) 
   f(r)\, dr,
\end{multline*}
where $\sinc \col \R \to \R$ is given by $\sinc(z) = \frac{\sin(z)}{z}$. In summary, we obtain
$$
   \Theta T_\eps u - T_\eps \Theta u = \frac{1}{2 \pi} \int_\R 
    \phi_c(s) \frac{1}{\eps} \FT{F}(\frac{s}{\eps}) 
    \int_0^s (s-r) \sinc\big((s-r)\sqrt{-\Delta_\rho}\,\big) 
   f(r)\, dr\, ds,
$$
where 
$$
   f(r) =  [\Theta, \Delta_\rho] w(r) = [\Theta, \Delta_\rho] 
   \cos(r \sqrt{- \Delta_\rho}) u.
$$
Since $[\Theta, \Delta_\rho]$ is of third order and maps $H^3_{\mathrm{comp}}$ into $L^2$ and
$\cos(r \sqrt{- \Delta_\rho})$ has operator norm $1$ on every Sobolev space there exists a constant $C_1 > 0$ such that $\norm{f(r)}{L^2} \leq C_1 \norm{u}{H^3}$ for every $r \in \R$. Furthermore, the operator norm of $(s-r) \sinc\big((s-r)\sqrt{-\Delta_\rho}\,\big)$ is bounded by $\sup_{z\in\R} |(s-r) \sinc((s-r)z)| = |s-r|$. Combining these upper bounds we estimate 
\begin{multline*}
  \norm{\Theta T_\eps u - T_\eps \Theta u}{L^2} \leq
  \frac{C_1}{2 \pi} \int_\R 
    |\phi_c(s)| \frac{1}{\eps} |\FT{F}(\frac{s}{\eps})| 
    \frac{s^2}{2}\, ds \cdot \norm{u}{H^3}\\ 
    = \frac{C_1}{4 \pi} \norm{u}{H^3} 
    \int_\R |\phi_c(\eps \sig)| |\FT{F}(\sig)|
    \sig^2 \, d\sig \cdot \eps^2
    \leq \frac{C_1}{4 \pi} \, \norm{u}{H^3} \norm{\phi_c}{L^\infty} 
    \norm{\FT{F''}}{L^1}\cdot \eps^2.
\end{multline*}
\end{proof}

\begin{rem}\label{comm_lemma} Applying the reasoning of the proof of Theorem \ref{asymptth} 
to the operators $\d_t$ or $M_\alpha$ (multiplication by $\alpha$) instead of $\Theta$ gives
the following additional asymptotic properties:
\begin{itemize}
\item[(i)] If $u \in H^2(\R\times S)$ then  $\norm{\,T_\eps \d_t u - \d_t T_\eps u\,}{L^2} = 
\mathcal{O}(\eps^2)$  ($\eps \to 0$).
\item[(ii)] Let $\al \in \Cinf(\R\times S)$ 
and let $u \in H^1(\R\times S)$ have compact support. Then 
$$
\norm{\,T_\eps (\al u) - \al T_\eps u\,}{L^2} = 
\mathcal{O}(\eps^2) \qquad (\eps \to 0)
$$ 
\end{itemize}
\end{rem}
We have constructed the regularization operators $(T_\eps)_{\eps \in I}$ and the embedding $\iota$ of distributions on $\R\times S$ based on the Riemannian metric $\rho$ given by \eqref{rho}. However, the construction itself does not directly reflect the metric splitting in \eqref{rho} or \eqref{lambda} and so far we have not paid special attention to the foliation by the space-like Cauchy hypersurfaces $\{t\} \times S$ ($t \in \R$). However, this foliation becomes  essential in case of distributions that allow restriction to these hypersurfaces. The latter is true in particular for distributional solutions to the wave equation $\Box_\la u = f$, where $f \in \Cinf(\R, \D'(S))$. In these situations we automatically have $u \in \Cinf(\R, \D'(S))$, since $\Char(\Box_\la)$ does not contain any elements of the form $(t,x;\pm 1, 0) \in T^*(\R\times S)$
(cf.\ \cite{D:93}, 23.65.5). Thus for every $t\in\R$ the value $u(t)$ is an element of $\D'(S)$ and the metric splitting \eqref{lambda} provides us with a Riemannian metric $h_t$ on $S$, which can be used to regularize or embed $u(t)$ according to our general construction. For every $t\in\R$ let $(T^{h_t}_{\eps})_{\eps \in I}$ denote the regularization obtained from the Riemann metric $h_t$ on $S$ and let $\iota_{h_t}$ denote the corresponding embedding $\D'(S) \hookrightarrow \G(S)$. The following statements compare these with the global constructions on $\R\times S$.

\begin{thm}
\noindent (i) The embedding $\iota \col \D'(\R\times S) \hookrightarrow \G(\R\times S)$ respects the metric splitting \eqref{lambda} in the weak sense: If $u\in\Cinf(\R,\D'(S))$ then we have
$$  
   \forall t \in \R:\; \iota(u)(t) \approx \iota_{h_t}(u(t)).
$$

\noindent (ii) If $u \in \Cinf(\R,H^2(S))$, then we have for every compact subset $Z \subseteq \R$
$$
   \sup_{t \in Z} 
   \norm{\,(T_\eps u)(t) - T^{h_t}_{\eps} u(t)\,}{L^2(S)} = 
\mathcal{O}(\eps^2)  \quad (\eps \to 0).
$$
\end{thm}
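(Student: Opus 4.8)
The two assertions have a parallel structure, and my strategy is to reduce both to the machinery already developed on $\R \times S$. For part (i), the embedding $\iota$ on $\R\times S$ is built from functional calculus of $\Delta_\rho = \beta\inv \d_t^2 + (\text{lower order}) + \Delta_{h_t}$, where the $t$-derivative terms couple the slices together, whereas $\iota_{h_t}$ comes purely from $\Delta_{h_t}$ acting on $S$. The plan is to write both $\iota(u)(t)$ and $\iota_{h_t}(u(t))$ via their defining Bochner integrals of the form \eqref{tepsdef}, with wave propagators $\cos(s\sqrt{-\Delta_\rho})$ and $\cos(s\sqrt{-\Delta_{h_t}})$ respectively. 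Association is a statement about convergence in $\D'$, so I would test against a fixed $\psi \in \D(S)$ and show the difference tends to $0$. The point is that as $\eps \to 0$ the cutoff $\phi_c((F_\eps)^\wedge$ concentrates the $s$-integral near $s = 0$, and for small propagation time $s$ the finite-speed-of-propagation result (Proposition~\ref{sheaf}) says the full $\rho$-wave flow and the slice $h_t$-wave flow on $S$ agree to leading order; the discrepancy is governed by the transversal part $\Theta$, which by the Duhamel expansion in the proof of Theorem~\ref{asymptth} contributes a term of order $s$ under the integral and hence vanishes in the distributional limit.

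For part (ii) I would make this quantitative. First I would fix $t \in Z$ and compare $(T_\eps u)(t)$ with $T^{h_t}_\eps u(t)$ by inserting, as an intermediate object, the operator $T^{h_t}_\eps$ acting on the slice; the difference splits into (a) the error from replacing the global propagator $\cos(s\sqrt{-\Delta_\rho})$ by the slice propagator $\cos(s\sqrt{-\Delta_{h_t}})$, and (b) the error from the $t$-dependence of the construction itself. Error (a) is exactly a commutator-with-$\Theta$ type term: applying the Duhamel principle to the Cauchy problem $(*)$ exactly as in Theorem~\ref{asymptth}, the difference of the two wave solutions starting from the same slice datum $u(t) \in H^2(S)$ is expressed through $\int_0^s(s-r)\sinc((s-r)\sqrt{-\Delta})\,[\Theta,\cdot]\,dr$, whose $L^2(S)$-norm is bounded by $C|s|^2 \norm{u(t)}{H^2}$ because the relevant transversal operator is second order on the slice and maps $H^2$ into $L^2$. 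Folding this through the $\eps$-integral, the factor $s^2$ against $\tfrac{1}{\eps}\FT{F}(s/\eps)$ produces exactly the $\eps^2$ gain via the substitution $s = \eps\sigma$ together with $\norm{\FT{F''}}{L^1} < \infty$, precisely as in the final estimate of Theorem~\ref{asymptth}.

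The supremum over the compact set $Z$ is handled by noting that all the constants involved — the $H^2(S) \to L^2(S)$ operator norm of the transversal terms, the smooth coefficients $\beta$ and $\det h_t$ and their $t$-derivatives appearing in $\Theta$, and $\sup_{t \in Z}\norm{u(t)}{H^2(S)}$ — are continuous, hence bounded, in $t$ over the compact $Z$; so the $\eps^2$-estimate is uniform in $t \in Z$. The main obstacle I anticipate is bookkeeping the $t$-dependence cleanly: one must verify that the slice Sobolev norms $H^2(S)$ built from $h_t$ are uniformly equivalent for $t \in Z$, that $u \in \Cinf(\R,H^2(S))$ indeed gives a locally uniform bound on $\norm{u(t)}{H^2(S)}$, and that the finite-speed-of-propagation comparison between $\Delta_\rho$ and $\Delta_{h_t}$ is legitimate slicewise — i.e.\ that restricting the global $\rho$-wave flow to a single slice and comparing with the intrinsic $h_t$-flow really does isolate $\Theta$ as the leading error and not some coordinate artifact. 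Once this transversal-versus-tangential splitting is set up correctly, the quantitative $\eps^2$ bound follows verbatim from the Duhamel estimate of Theorem~\ref{asymptth}.
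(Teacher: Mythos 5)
Your part (ii) follows the paper's own proof almost step for step: compare $w(s,t)=(\cos(s\sqrt{-\Delta_\rho})u)(t)$ and $v(s,t)=\cos(s\sqrt{-\Delta_{h_t}})u(t)$ as solutions of two Cauchy problems with the same initial slice datum, express the difference $\psi_t=w(\cdot,t)-v(\cdot,t)$ by Duhamel with source $f_t=(\Delta_\rho-\Delta_{h_t})w(\cdot,t)$, deduce $\norm{\psi_t(s)}{L^2(S)}\le \tfrac{s^2}{2}\sup_{|\sigma|\le|s|}\norm{f_t(\sigma)}{L^2(S)}$, and convert the factor $s^2$ into $\eps^2\norm{\phi_c}{L^\infty}\norm{\FT{F''}}{L^1}$ by the substitution $s=\eps\sigma$. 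Three corrections, none fatal: (a) before the Bochner-integral formula for $T_\eps u$ may be used, $u$ must be placed in $L^2(\R\times S)$; the paper achieves this by a cutoff $\chi(t)\equiv 1$ near $Z$ together with uniform proper support of $T_\eps$ (so that $(T_\eps\tilde u-T_\eps u)_\eps$ is negligible over $Z$), a step your proposal skips. (b) The source $f_t$ is \emph{not} ``second order on the slice'' and is not bounded by $\norm{u(t)}{H^2(S)}$: the operator $\Delta_\rho-\Delta_{h_t}$ differentiates in $t$, so $f_t$ involves $\d_t w$ and $\d_t^2 w$, and its $L^2(S)$-bound (and the uniformity over $t\in Z$) comes from $u\in\Cinf(\R,H^2(S))$, i.e.\ from smooth $t$-dependence, not from a single-slice norm. (c) Proposition \ref{sheaf} is irrelevant here: it compares $D$ with $D_X$ on a double of a compact piece of one and the same manifold, and says nothing about the relation between the $\rho$-propagator on $\R\times S$ and the intrinsic $h_t$-propagator on $S$ --- that relation is precisely what the Duhamel term measures.

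The genuine gap is in part (i). For $u\in\Cinf(\R,\D'(S))$ the slice values are arbitrary distributions, and your argument applies the $L^2$-operator-norm machinery of Theorem \ref{asymptth} to data for which it is meaningless: the Duhamel source is then only a distribution, so the claim that it ``contributes a term of order $s$ and hence vanishes in the distributional limit'' has no estimate behind it. What is missing is the device that transfers the distributional statement to the Sobolev setting of (ii). The paper's route: since association is tested against $\psi\in\D(S)$ and all operators involved are properly supported, one may assume $\supp u(t)$ compact; then $u(t)\in H^l(S)$ for some $l\in\Z$, and since $(1-\Delta_{h_t})^{\pm l/2}$ commutes \emph{strongly} with $\iota_{h_t}$ (it is functional calculus of the very operator defining $\iota_{h_t}$) and \emph{weakly}, i.e.\ up to association, with $\iota$, one writes $u(t)=(1-\Delta_{h_t})^{k}v(t)$ with $v(t)\in H^2(S)$ and applies (ii) to $v$. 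Without this Sobolev-shift-and-commutation argument (or an equivalent rerun of all your propagator estimates in the negative-order spaces $H^{l-2}(S)$, which would itself require justification of the $\sinc$-operator bounds and of the propagators on those spaces), part (i) remains unproved.
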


\begin{proof} (ii) Pick $\chi\in \D(\R)$ such that $\chi\equiv 1$ near $Z$ and let
$\tilde u(t):= \chi(t)u(t)$. It then follows from \cite{DHK:10}, Prop.\ 3.7
that $(T_\eps \tilde u - T_\eps u)_\eps$ is negligible on $\supp(\tilde
u - u)^c \supseteq Z$.
Thus we may without loss of generality assume that the support of $u$ is bounded
in $t$, so $u\in L^2(\R\times S)$. This allows us to employ the integral formulae 
defining $T_\eps$  and $T^{h_t}_\eps$ and obtain
\begin{multline*}
  (T_\eps u)(t) - T^{h_t}_{\eps} u(t) = \\ 
  \frac{1}{2 \pi}
  \int_\R \phi_c(s) \frac{1}{\eps} \FT{F}(\frac{s}{\eps}) 
  \Big( \underbrace{(\cos(s \sqrt{-\Delta_\rho})u)(t)}_{=: w(s,t)} 
  - \underbrace{\cos(s \sqrt{- \Delta_{h_t}}) u(t)}_{=: v(s,t)}
  \Big)\, ds,
\end{multline*}
where $v, w \in \Cinf(\R^2, H^2(S))$ solve the following Cauchy problems, respectively:
\begin{align*}
   (\d_s^2 - \Delta_\rho) w &= 0 
   \quad (\text{in } \D'(\R\times \R\times S)),&
   \quad w \mid_{s=0} &= u,& \quad \d_s w \mid_{s=0} &= 0,\\
   (\d_s^2 - \Delta_{h_t}) v(.,t) &= 0 
   \quad(\text{in } \D'(\R\times S)),&
   \quad v(0,t)  &= u(t),& \quad \d_s v (0,t) &= 0.
\end{align*}
Therefore we have at arbitrary, but fixed $t \in \R$
$$
  \d_s^2 w(.,t) - \Delta_{h_t} w(.,t) = \Delta_\rho w(.,t) - \Delta_{h_t}w(.,t) =: f_t,
$$
where $f_t \in \Cinf(\R, L^2(S))$, since $u$ (hence $w$) has values in $H^2(S)$. We observe that $\psi_t := w(.,t) - v(.,t)$ satisfies the Cauchy problem
$$
  (\d_s^2 - \Delta_{h_t}) \psi_t = f_t 
   \quad(\text{in } \D'(\R\times S)),
   \quad \psi_t(0)  = 0, \quad \d_s \psi_t = 0,
$$ 
which implies
$$
  \psi_t (s) = \int_0^s (s-r) 
  \sinc((s-r) \sqrt{- \Delta_{h_t}}) f_t(r) \, dr.
$$
Hence we obtain
$$
  \norm{\psi_t(s)}{L^2(S)} \leq |\int_0^s (s-r) 
  \norm{f_t(r)}{L^2(S)}\, dr|
  \leq \frac{s^2}{2} \cdot \sup_{|\sig| \leq |s|} 
  \norm{f_t(\sig)}{L^2(S)} 
$$
and therefore arrive at
\begin{multline*}
  \norm{(T_\eps u)(t) - T^{h_t}_{\eps} u(t)}{L^2(S)} \leq
  \frac{1}{2 \pi}
  \int_\R |\phi_c(s)| \frac{1}{\eps} |\FT{F}(\frac{s}{\eps})| 
  \norm{\psi_t(s)}{L^2(S)}\, ds\\
  \leq \frac{\eps^2}{4 \pi}
  \int_{-2c/\eps}^{2c/\eps} |\phi_c(\eps \tau)| |\FT{F}(\tau)| 
  \tau^2 \sup_{|\sig| \leq \eps |\tau|} 
  \norm{f_t(\sig)}{L^2(S)} 
  \, d\tau \\
  \leq \frac{\eps^2}{4 \pi} \norm{\phi_c}{L^\infty} 
  \sup_{|\sig| \leq 2c} \norm{f_t(\sig)}{L^2(S)}
  \norm{\FT{F''}}{L^1} = O(\eps^2) \quad (\eps \to 0)
\end{multline*}
uniformly when $t$ varies in a compact set.

(i) Since association is checked by action on test functions in $\D(S)$ we may reduce to the case that $\supp(u(t))$ is compact. Then $u(t) \in H^l(S)$ for some $l \in \Z$ and since $(1 - \Delta_{h_t})^{\pm l/2}$ commutes weakly with $\iota$ and strongly with $\iota_{h_t}$ the assertion follows from (ii). 
\end{proof}

\newcommand{\SortNoop}[1]{}

%\bibliographystyle{plain}

%\bibliography{g,mike}

% ------------------------------------------------------------------------

\subsection*{Acknowledgment}
G.H.\ thanks the organizers for support by an EPSRC Pathway to Impact Award.
We also acknowledge the support of FWF-projects Y237, P20525, and P23714.
Finally, we would like to thank the referee for several comments that have led to
improvements in the paper.

% ------------------------------------------------------------------------
\end{document}